\documentclass{fancypaper-cm}
\usepackage[extdef=true]{delimset}
\DeclareMathDelimiterSet{\scal}[2]{
  \selectdelim[l]< {#1} \mathpunct{}\selectdelim[p]| {#2} \selectdelim[r]>
}

\newcommand{\menge}[2]{
  \bigl\{{#1}\mid{#2}\bigr\}
}
\DeclareMathDelimiterSet{\Menge}[2]{
  \selectdelim[l]\{{#1}\selectdelim[m]|{#2}\selectdelim[r]\}
}

\DeclareMathDelimiterSet{\floor}[1]{
  \selectdelim[l]\lfloor {#1} \selectdelim[r]\rfloor
}

\newcommand{\NN}{\mathbb{N}}
\newcommand{\RR}{\mathbb{R}}

\newcommand{\HH}{\mathcal{H}}

\newcommand\restr[2]{{
  \left.\kern-\nulldelimiterspace 
    #1 
  \right|_{#2} 
  }
}

\newcommand{\pinf}{{+}\infty}

\newcommand{\emp}{\varnothing}

\renewcommand{\leq}{\leqslant}
\renewcommand{\geq}{\geqslant}

\newcommand{\zeroun}{\intv[o]{0}{1}}
\newcommand{\rzeroun}{\intv[l]{0}{1}}
\newcommand{\lzeroun}{\intv[r]{0}{1}}

\newcommand{\RP}{\intv[r]{0}{\pinf}}
\newcommand{\RPP}{\intv[o]{0}{\pinf}}

\DeclareMathOperator{\ran}{ran}

\DeclareMathOperator{\zer}{zer}

\newcommand{\Id}{\mathrm{Id}}

\newcommand{\weakly}{\rightharpoonup}

\DeclareMathOperator{\ccone}{\overline{cone}}

\DeclareMathOperator{\proj}{proj}

\newcommand{\minimize}[2]{
  \underset{\substack{{#1}}}{\operatorname{minimize}}\;\;#2
}

\crefname{cexample}{Counterexample}{Counterexamples}
\newtheorem{cexample}[theorem]{Counterexample}

\author{Minh N. B\`{u}i}
\affil{%
Chongqing Normal University
\affilcr
School of Mathematical Sciences
\affilcr
Shapingba District, Chongqing 401331, P.~R.~China
\affilcr
\email{mnb@cqnu.edu.cn}
}

\author{Honglin Luo}
\affil{%
Chongqing Normal University
\affilcr
School of Mathematical Sciences
\affilcr
Shapingba District, Chongqing 401331, P.~R.~China
\affilcr
\email{luohonglin@cqnu.edu.cn}
}

\author{Xianfu Wang}
\affil{%
University of British Columbia
\affilcr
Irving K. Barber Faculty of Science,
Department of Mathematics
\affilcr
Kelowna, British Columbia V1V~1V7, Canada
\affilcr
\email{shawn.wang@ubc.ca}
}

\title{The Douglas--Rachford algorithm for\\ normal cones
converges only weakly\thanks{Corresponding author:
M.~N.~B\`{u}i (\email{mnb@cqnu.edu.cn}).
}}

\begin{document}

\date{~}

\thispagestyle{plain.scrheadings}

\maketitle

\begin{abstract}
We show that the Douglas--Rachford algorithm applied to two normal
cone operators associated with closed convex sets can fail to
converge strongly, even when one set is a closed vector subspace,
the other is a closed convex cone, and their intersection is a
singleton.
\end{abstract}

\begin{keywords}
Douglas--Rachford algorithm;
monotone operator;
normal cone operator;
operator splitting;
strong convergence.
\end{keywords}

The Douglas--Rachford algorithm \cite{Douglas56,Lions79}
(see \cite[Section~6.1]{Combettes24} for a historical account)
is a fundamental method for finding a zero of the sum of maximally
monotone operators acting on a real Hilbert space, a problem
arising in various areas of applied mathematics, including
optimization,
machine learning,
signal processing,
statistics,
optimal control,
and optimal transportation
\cite{Combettes24}.
To facilitate our discussion, let us state its convergence
property (see, e.g., \cite[Theorem~26.11]{Livre1} for a proof);
we refer the reader to \cite{Livre1} for background
on monotone operator theory, as well as for notation not explicitly
defined herein.
Recall that the resolvent of a maximally monotone set-valued
operator $M$ acting on a real Hilbert space is
\begin{equation}
J_M=(\Id+M)^{-1}.
\end{equation}

\begin{theorem}
\label{t:1}
Let $\HH$ be a real Hilbert space, and let
$A$ and $B$ be maximally monotone operators from
$\HH$ to $2^{\HH}$ such that
$\zer(A+B)
=\menge{x\in\HH}{0\in Ax+Bx}
\neq\emp$.
Let $y_0\in\HH$ and iterate
\begin{equation}
\begin{array}{l}
\text{for $n=0,1,\ldots$}\\
\left\lfloor
\begin{array}{l}
x_n=J_By_n\\
y_{n+1}=y_n+J_A(2x_n-y_n)-x_n.
\end{array}
\right.
\end{array}
\end{equation}
Then there exists $(x,y)\in\HH\times\HH$ such that
\begin{enumerate}
\item
\label{t:1i}
$x=J_By$, $x=J_A(2x-y)$, and $x\in\zer(A+B)$.
\item
\label{t:1ii}
$x_n\weakly x$ and $y_n\weakly y$.
\end{enumerate}
\end{theorem}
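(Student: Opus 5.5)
The plan is to recast the iteration as a fixed-point iteration of a firmly nonexpansive operator. Then the Krasnosel'ski\u{\i}--Mann/Opial theory gives weak convergence of $(y_n)$. The main work is transferring that convergence to the shadow sequence $(x_n)$.

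\emph{Step 1: the governing operator.} Set $R_A=2J_A-\Id$ and $R_B=2J_B-\Id$. Since $A$ and $B$ are maximally monotone, $J_A$ and $J_B$ are firmly nonexpansive, so $R_A$ and $R_B$ are nonexpansive. A direct computation gives
\begin{equation*}
y_{n+1}=T y_n,\qquad T=\Id-J_B+J_A\circ R_B=\tfrac12\bigl(\Id+R_AR_B\bigr),
\end{equation*}
so $T$ is firmly nonexpansive. Next I will show $\zer(A+B)=J_B(\operatorname{Fix}T)$. Indeed, $y\in\operatorname{Fix}T$ if and only if $x=J_By$ satisfies $x=J_A(2x-y)$. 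This means $y-x\in Bx$ and $x-y\in Ax$, hence $0\in Ax+Bx$. Conversely, given $x\in\zer(A+B)$, pick $u\in Bx$ with $-u\in Ax$ and set $y=x+u$. Consequently $\operatorname{Fix}T\neq\emp$.

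\emph{Step 2: convergence of $(y_n)$.} Firm nonexpansiveness and $\operatorname{Fix}T\neq\emp$ give, for every $z\in\operatorname{Fix}T$,
\begin{equation*}
\|y_{n+1}-z\|^2+\|y_n-y_{n+1}\|^2\leq\|y_n-z\|^2 .
\end{equation*}
Hence $(y_n)$ is Fej\'er monotone with respect to $\operatorname{Fix}T$, and $y_n-Ty_n\to0$. By demiclosedness of $\Id-T$ at $0$, every weak cluster point of $(y_n)$ lies in $\operatorname{Fix}T$. Opial's lemma (Fej\'er monotonicity plus this property) then yields $y_n\weakly y\in\operatorname{Fix}T$. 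Setting $x=J_By$, Step 1 gives item \ref{t:1i}.

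\emph{Step 3: weak convergence of $(x_n)$.} This is the main obstacle, since $J_B$ is not weakly continuous. I would argue as follows. The sequence $x_n=J_By_n$ is bounded, and $z_n=J_A(2x_n-y_n)$ satisfies $z_n-x_n=y_{n+1}-y_n\to0$. Set $b_n=y_n-x_n\in Bx_n$ and $a_n=x_n-y_n+(z_n-x_n)\in Az_n$, so that $a_n+b_n=z_n-x_n\to0$. Let $\bar x$ be a weak cluster point of $(x_n)$ along a subsequence. Along it, $z_n\weakly\bar x$, $b_n\weakly y-\bar x$, and $a_n\weakly \bar x-y$.

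Apply the weak--weak closedness of the graph of the maximally monotone operator $(x,z)\mapsto (Bx\times Az)$ restricted suitably. More concretely, use the standard lemma \cite[Proposition~26.5]{Livre1}: if $x_n\weakly\bar x$, $z_n\weakly\bar z$, $b_n\weakly b$, $a_n\weakly a$, $x_n-z_n\to0$ and $a_n+b_n\to0$, then $\bar x=\bar z$, $b\in B\bar x$, $-b\in A\bar x$. This lemma follows because the operator $(x,a)\mapsto(Bx+\dots)$ built from $A\times B^{-1}$ plus a skew linear operator is maximally monotone, so its graph is sequentially closed in weak$\times$strong. Hence $y-\bar x\in B\bar x$, that is, $\bar x=J_By=x$. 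So $x$ is the only weak cluster point of the bounded sequence $(x_n)$, and therefore $x_n\weakly x$, which is item \ref{t:1ii}.
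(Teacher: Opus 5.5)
Your proposal is correct and is essentially the proof the paper defers to, \cite[Theorem~26.11]{Livre1}. There, weak convergence of $(y_n)$ comes from the firmly nonexpansive operator $\frac12(\Id+R_AR_B)$ via Fej\'er monotonicity, demiclosedness and Opial's lemma, and \cite[Proposition~26.5]{Livre1} (weak$\times$strong closedness of the graph of $A\times B^{-1}$ plus a skew operator) then shows that $J_By$ is the only weak cluster point of $(x_n)$.

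Two small corrections are needed.
\begin{enumerate}
\item The resolvent identity gives $a_n=2x_n-y_n-z_n=(x_n-y_n)+(x_n-z_n)\in Az_n$, not $(x_n-y_n)+(z_n-x_n)$. Hence $a_n+b_n=x_n-z_n\to0$, and the weak limit $\bar x-y$ is unchanged.
\item Delete the remark about ``weak--weak closedness''. Graphs of maximally monotone operators are not weakly sequentially closed in general, and only the weak$\times$strong closedness you invoke afterwards is needed.
\end{enumerate}
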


In the general setting of \cref{t:1}, the work \cite{Sicon20}
demonstrates that the weak convergence in item \cref{t:1ii}
cannot be strengthened to strong convergence.
More precisely, using the closed vector subspace $V$ and the closed
convex cone $K$ constructed by Hundal in his study of
the method of alternating projections \cite{Hundal04},
\cite{Sicon20} establishes a counterexample with
\begin{equation}
\label{e:ABsico}
A=N_V
\quad\text{and}\quad
B=(\proj_V\circ\proj_K\circ\proj_V)^{-1}-\Id;
\end{equation}
see \cref{cex:1} below for the definitions of $V$ and $K$.
Here, given a nonempty closed convex subset $C$ of a real Hilbert
space $\HH$,
the normal cone operator associated with $C$ is the maximally
monotone operator
\begin{equation}
N_C\colon\HH\to 2^{\HH}\colon x\mapsto
\begin{cases}
\menge{u\in\HH}{(\forall y\in C)\,\,\scal{y-x}{u}\leq 0},
&\text{if}\,\,x\in C;\\
\emp,&\text{otherwise},
\end{cases}
\end{equation}
and the projector $\proj_C=(\Id+N_C)^{-1}$ maps every point
$x\in\HH$ to the unique point $\proj_Cx\in C$ such that
\begin{equation}
\norm{x-\proj_Cx}=\min_{y\in C}\norm{x-y}.
\end{equation}
Since the operator $B$ in \cref{e:ABsico} is not a normal cone
operator, it remains an open question whether the weak convergence
of the sequences $(x_n)_{n\in\NN}$ and $(y_n)_{n\in\NN}$ in
\cref{t:1}\cref{t:1ii} can be strengthened to strong convergence
when both $A$ and $B$ are normal cone operators associated with
closed convex subsets of $\HH$, a setting of interest in
applications such as image recovery \cite{Combettes96}
and signal reconstruction \cite{Thao21}.
In this setting, \cite[Corollary~4.5]{Bauschke14}
establishes strong convergence of these sequences when both sets
are affine subspaces. We complement \cite{Sicon20} by
employing the sets $V$ and $K$ to show that strong convergence can
fail even when one set is a closed vector subspace, the other is a
closed convex cone, and their intersection is a singleton.

\begin{cexample}
\label{cex:1}
Let $\HH$ be a separable infinite-dimensional real Hilbert space,
let $(e_k)_{k\in\NN}$ be an orthonormal basis of $\HH$,
and set
\begin{equation}
\begin{cases}
V=\set{e_0}^{\perp}\\
\displaystyle
K=\ccone\menge{
h(\xi)e_0
+\cos\brk!{\phi(\xi)}e_{\floor{\xi}+1}
+\sin\brk!{\phi(\xi)}e_{\floor{\xi}+2}
}{\xi\in\RP},
\end{cases}
\end{equation}
where $\floor{\xi}$ denotes the integer part of $\xi\in\RP$ and
\begin{equation}
h\colon\RP\to\RR\colon\xi\mapsto\exp\brk{-100\xi^3}
\quad\text{and}\quad
\phi\colon\RP\to\RR\colon\xi\mapsto
\frac{\pi\brk{\xi-\floor{\xi}}}{2}.
\end{equation}
Set $y_0=e_2$ and iterate
\begin{equation}
\label{e:1}
\begin{array}{l}
\text{for $n=0,1,\ldots$}\\
\left\lfloor
\begin{array}{l}
x_n=\proj_Vy_n\\
y_{n+1}=y_n+\proj_K(2x_n-y_n)-x_n.
\end{array}
\right.
\end{array}
\end{equation}
Then $V\cap K=\set{0}$ and the following hold:
\begin{enumerate}
\item
$(x_n)_{n\in\NN}$ converges weakly---but not strongly---to $0$.
\item
$(y_n)_{n\in\NN}$ converges weakly---but not strongly---to a point
in $\RPP\cdot e_0$.
\end{enumerate}
\end{cexample}

In the remainder of the paper, we work in the setting of
\cref{cex:1} and employ the notation
\begin{equation}
(\forall\xi\in\RP)\quad
u(\xi)
=\cos\brk!{\phi(\xi)}e_{\floor{\xi}+1}
+\sin\brk!{\phi(\xi)}e_{\floor{\xi}+2}
\quad\text{and}\quad
p(\xi)=h(\xi)e_0+u(\xi).
\end{equation}
Furthermore, let
\begin{equation}
\label{e:drop}
T=\Id+\proj_K\circ(2\proj_V-\Id)-\proj_V
\end{equation}
be the associated \emph{Douglas--Rachford operator}.

To prove \cref{cex:1}, let us first examine the iterations of
\cref{e:1} ``na\"{i}vely.''
Since $x_0=y_0=e_2=u(1)$, we learn from
\cite[Theorem~14]{Hundal04} that there exist
$\eta\in\intv[o]{1}{2}$ and $\alpha\in\zeroun$ such that
$\proj_K(2x_0-y_0)
=\proj_Ke_2
=\alpha p(\eta)$, which leads to $y_1=\alpha p(\eta)$
and $x_1=\proj_Vy_1=\alpha u(\eta)$. In turn, determining
$y_2$ requires calculating
\begin{equation}
\proj_K(2x_1-y_1)
=\proj_K\brk!{2\alpha u(\eta)-\alpha p(\eta)}
=\alpha\proj_K\brk{u(\eta)- h(\eta)e_0}.
\end{equation}
This motivates us to investigate the projection
\begin{equation}
\proj_K\brk{u(\xi)-\beta e_0}
\end{equation}
for a given $\xi\in\intv[r]{1}{\pinf}$ and a given
$\beta\in\intv{0}{1}$, which will be done in \cref{p:1}.
To this end, we follow the approach of
\cite[Theorem~14]{Hundal04}, which treats the case
$\proj_Ku(\xi)$.

We begin with four elementary lemmas that will be useful in
analyzing this projection. We leave the proofs of
\cref{l:0,l:3,l:1} to the reader.

\begin{lemma}
\label{l:0}
The following hold:
\begin{enumerate}
\item
\label{l:0iii}
$(\forall\xi\in\intv{0}{1})$
$\sin(\pi\xi/2)\geq\xi$.
\item
\label{l:0iv}
$(\forall\xi\in\intv{0}{\pi/2})$
$\cos\xi\geq 1-\xi^2/2$.
\item
\label{l:0v}
$(\forall\xi\in\intv[o]{0}{\pi/2})$
$\cot\xi<1/\xi$.
\end{enumerate}
\end{lemma}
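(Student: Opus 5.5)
Each of the three inequalities is elementary, and I would prove each one by studying an auxiliary function of one real variable on the relevant interval. In every case the key tools are concavity or convexity, or the sign of a derivative.

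For \cref{l:0iii}, set $f\colon\intv{0}{1}\to\RR\colon\xi\mapsto\sin(\pi\xi/2)-\xi$. Then $f(0)=f(1)=0$, and $f''(\xi)=-(\pi/2)^2\sin(\pi\xi/2)\leq 0$ on $\intv{0}{1}$, so $f$ is concave. A concave function on an interval lies above the chord joining its endpoint values, and that chord is identically $0$ here. Hence $f\geq 0$ on $\intv{0}{1}$, which is the claim.

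For \cref{l:0iv}, set $g(\xi)=\cos\xi-1+\xi^2/2$. Then $g(0)=0$ and $g'(\xi)=\xi-\sin\xi$. This derivative vanishes at $0$ and is nondecreasing, since $(\xi-\sin\xi)'=1-\cos\xi\geq 0$. Hence $g'\geq 0$ on $\intv{0}{\pi/2}$, so $g$ is nondecreasing and therefore $g\geq g(0)=0$. The argument in fact works on all of $\RP$; the restriction to $\intv{0}{\pi/2}$ is simply what the paper needs.

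For \cref{l:0v}, fix $\xi\in\intv[o]{0}{\pi/2}$, where $\sin\xi>0$. Multiplying by $\xi\sin\xi>0$, the inequality $\cot\xi<1/\xi$ becomes $\xi\cos\xi<\sin\xi$. Dividing by $\cos\xi>0$, this is in turn equivalent to $\xi<\tan\xi$. To prove the latter, set $k(\xi)=\tan\xi-\xi$. Then $k(0)=0$ and $k'(\xi)=\tan^2\xi>0$ on $\intv[o]{0}{\pi/2}$, so $k$ is strictly increasing and $k(\xi)>0$ there.

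I do not expect any genuine obstacle. The only point needing care is strictness in \cref{l:0v}: the derivative $\tan^2\xi$ must be strictly positive on the open interval, not merely nonnegative, and it is. One should also use the open interval so that the reduction to $\xi<\tan\xi$ is legitimate, since it divides by $\sin\xi$ and $\cos\xi$, both of which are nonzero there.
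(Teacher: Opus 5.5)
Your proof is correct; note that the paper gives no proof of this lemma to compare against, since it explicitly leaves it to the reader. Your three arguments are the standard ones: concavity of $\sin(\pi\xi/2)-\xi$ against its zero chord, monotonicity of $\cos\xi-1+\xi^2/2$ via $\xi-\sin\xi\geq 0$, and reduction of the cotangent bound to $\xi<\tan\xi$ with $(\tan\xi-\xi)'=\tan^2\xi>0$. You also handle strictness in the third item correctly.
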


\begin{lemma}
\label{l:3}
Let $a\in\RR$ and $b\in\RR$ be such that $a<b$,
let $f\colon\intv{a}{b}\to\RP$ be continuous and decreasing,
and let $(\xi_i)_{0\leq i\leq m+1}$ be a strictly increasing sequence
in $\intv{a}{b}$. Then
\begin{equation}
\sum_{i=0}^mf(\xi_{i+1})(\xi_{i+1}-\xi_i)
\leq\int_a^bf(t)dt.
\end{equation}
\end{lemma}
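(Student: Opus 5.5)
The plan is to compare the Riemann-type sum with the integral interval by interval, using that $f$ is decreasing and nonnegative. Set $I_i=\intv{\xi_i}{\xi_{i+1}}$ for $i\in\set{0,\ldots,m}$. These are consecutive subintervals of $\intv{a}{b}$. Because $(\xi_i)_{0\leq i\leq m+1}$ is strictly increasing, their interiors are pairwise disjoint. Since $f$ is continuous, it is Riemann integrable on $\intv{a}{b}$ and on every subinterval, so all integrals below make sense.

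First I will show the termwise bound. For every $t\in I_i$ we have $t\leq\xi_{i+1}$, and $f$ is decreasing, so $f(t)\geq f(\xi_{i+1})$. Integrating over $I_i$ then gives
\begin{equation}
f(\xi_{i+1})(\xi_{i+1}-\xi_i)\leq\int_{\xi_i}^{\xi_{i+1}}f(t)dt.
\end{equation}

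Next I will sum over $i\in\set{0,\ldots,m}$. By additivity of the integral over adjacent intervals, the right-hand sides add up to $\int_{\xi_0}^{\xi_{m+1}}f(t)dt$. Since $f\geq 0$ and $\intv{\xi_0}{\xi_{m+1}}\subset\intv{a}{b}$, this is at most $\int_a^bf(t)dt$, because the integrals over $\intv{a}{\xi_0}$ and $\intv{\xi_{m+1}}{b}$ are nonnegative. Chaining the two inequalities proves the claim.

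I do not expect a genuine obstacle. The only point needing care is the final enlargement from $\intv{\xi_0}{\xi_{m+1}}$ to $\intv{a}{b}$, which is exactly where the hypothesis that $f$ takes values in $\RP$ is used. The word ``decreasing'' need only be read in the non-strict sense.
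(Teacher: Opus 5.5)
Your proof is correct and complete. The termwise bound $f(\xi_{i+1})(\xi_{i+1}-\xi_i)\leq\int_{\xi_i}^{\xi_{i+1}}f(t)dt$ from monotonicity, summation by additivity, and the final enlargement from $\intv{\xi_0}{\xi_{m+1}}$ to $\intv{a}{b}$ using $f\geq 0$ is the standard argument; the paper gives no proof of this lemma and explicitly leaves it to the reader, so there is nothing further to compare.
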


\begin{lemma}
\label{l:2}
Let $f\colon\RPP\to\RPP$ be a differentiable function that is
convex on $\intv[r]{1}{\pinf}$, and
let $\xi\in\intv[r]{1}{\pinf}$ and $\eta\in\intv[r]{1}{\pinf}$ be
such that $\abs{\xi-\eta}\leq 1$.
Then
\begin{equation}
\frac{\pi}{2}f(\xi)
-\frac{\pi}{2}f(\eta)\cos\frac{\pi(\xi-\eta)}{2}
-f'(\eta)\sin\frac{\pi(\xi-\eta)}{2}
\geq 0.
\end{equation}
\end{lemma}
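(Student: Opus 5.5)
Set $\theta=\pi(\xi-\eta)/2$, so that $\abs{\theta}\leq\pi/2$, and write $g(\xi)$ for the left-hand side, regarded as a function of $\xi$ with $\eta$ fixed. The inequality to prove is
\begin{equation}
\frac{\pi}{2}\bigl(f(\xi)-f(\eta)\cos\theta\bigr)\geq f'(\eta)\sin\theta .
\end{equation}
The first reduction is convexity. Since $f$ is convex on $\intv[r]{1}{\pinf}$ and both $\xi$ and $\eta$ lie there, the tangent line inequality gives
\begin{equation}
f(\xi)\geq f(\eta)+f'(\eta)(\xi-\eta)=f(\eta)+\frac{2}{\pi}f'(\eta)\theta .
\end{equation}
Substituting this bound, it suffices to show
\begin{equation}
\frac{\pi}{2}f(\eta)(1-\cos\theta)+f'(\eta)(\theta-\sin\theta)\geq 0 .
\end{equation}
The first term is nonnegative because $f>0$ and $\cos\theta\leq 1$.

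For the second term we split on the sign of $\theta$. The sign of $\theta-\sin\theta$ equals the sign of $\theta$, so:
\begin{itemize}
\item If $f'(\eta)\geq 0$ and $\theta\geq 0$, the second term is nonnegative and we are done.
\item If $f'(\eta)\leq 0$ and $\theta\leq 0$, the second term is again nonnegative and we are done.
\end{itemize}

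The mixed-sign cases are the real obstacle. Take $f'(\eta)<0$ and $\theta>0$. The tangent bound alone is too weak here, so we use positivity of $f$ together with convexity. Convexity makes $f'$ nondecreasing on $\intv[r]{1}{\pinf}$. I expect a relation between $f(\eta)$ and $\abs{f'(\eta)}$ that involves only points at distance at most $1$; the approach is to look at the interval where the tangent line at $\eta$ stays above $f>0$ (or, in the other sign case, go back toward $\eta-1\geq 1$). This does not by itself give $\abs{f'(\eta)}\leq f(\eta)$ in general.

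So instead I would work directly with $g$ as a function of $\xi$, keeping $\eta$ fixed.
\begin{itemize}
\item At $\xi=\eta$ we have $g(\eta)=0$.
\item Differentiating,
\begin{equation}
g'(\xi)=\frac{\pi}{2}\Bigl(f'(\xi)+\frac{\pi}{2}f(\eta)\sin\theta-f'(\eta)\cos\theta\Bigr).
\end{equation}
Hence $g'(\eta)=0$.
\item Use convexity in the form $f'(\xi)\geq f'(\eta)$ for $\xi\geq\eta$. This gives
\begin{equation}
f'(\xi)-f'(\eta)\cos\theta\geq f'(\eta)(1-\cos\theta).
\end{equation}
\item With that bound, the sign of $g'$ for $\theta\in\intv{0}{\pi/2}$ comes down to
\begin{equation}
\frac{\pi}{2}f(\eta)\sin\theta+f'(\eta)(1-\cos\theta)\geq 0 .
\end{equation}
\end{itemize}
For $f'(\eta)\geq 0$ this holds trivially. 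For $f'(\eta)<0$ it needs a comparison between $f(\eta)$ and $\abs{f'(\eta)}$, so the mixed-sign case is where the hypothesis $f>0$ has to be used more carefully.

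The only route I see there is the following. Write $\xi=\eta+s$ with $s\in\intv[l]{0}{1}$. Positivity gives $f(\xi)>0$, and combining it with the tangent bound yields
\begin{equation}
f(\xi)\geq\max\Bigl\{0,\;f(\eta)+f'(\eta)s\Bigr\}.
\end{equation}
It then remains to minimize
\begin{equation}
\frac{\pi}{2}\bigl(\max\{0,\,f(\eta)+f'(\eta)s\}-f(\eta)\cos\theta\bigr)-f'(\eta)\sin\theta
\end{equation}
over $s\in\intv[l]{0}{1}$, with $\theta=\pi s/2$ and the ratio $\abs{f'(\eta)}/f(\eta)$ as a parameter, and to check that the minimum is nonnegative. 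I expect this finite-dimensional trigonometric check, carried out with \cref{l:0}, to be the hardest part. The case $\theta<0$ is symmetric and uses $f'(\xi)\leq f'(\eta)$ for $\xi\leq\eta$.
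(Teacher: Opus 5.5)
Your tangent-line reduction, which uses the tangent inequality with weight $1$, settles only the cases $f'(\eta)\theta\geq 0$. The mixed-sign case is never proved. It is not a side case: it is exactly what occurs when the lemma is applied with $f=h$ in case (a) of the proof of \cref{p:1}. There $h'<0$, $\theta>0$ is possible, and $\abs{h'(\eta)}/h(\eta)=300\eta^2$ is huge.

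Your $g'$ detour cannot close as set up. After replacing $f'(\xi)$ by $f'(\eta)$ you would need $\frac{\pi}{2}f(\eta)\sin\theta\geq\abs{f'(\eta)}(1-\cos\theta)$, which fails at $\theta=\pi/2$ as soon as $\abs{f'(\eta)}>\frac{\pi}{2}f(\eta)$. Your final reduction via $f(\xi)\geq\max\{0,f(\eta)+f'(\eta)s\}$ is valid. But you stop at ``I expect'', and that verification is the whole content of the lemma.

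The check does close, and quickly. Normalize $f(\eta)=1$ and put $a=-f'(\eta)>0$ and $\theta=\pi s/2\in\intv[l]{0}{\pi/2}$.
On the branch $as\leq 1$, your lower bound equals $\frac{\pi}{2}(1-\cos\theta)-a(\theta-\sin\theta)$, which is decreasing in $a$.
On the branch $as>1$, it equals $-\frac{\pi}{2}\cos\theta+a\sin\theta$, which is increasing in $a$.
Both are therefore bounded below by the value at $a=1/s=\pi/(2\theta)$, namely $\frac{\pi}{2}\bigl(\frac{\sin\theta}{\theta}-\cos\theta\bigr)\geq 0$. This holds by \cref{l:0}\cref{l:0v}, and trivially at $\theta=\pi/2$.

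This computation exposes the idea the paper uses to avoid every case split: weight the tangent inequality by $\tau=\sin\theta/\theta\in\zeroun$ instead of by $1$. Since $\frac{2}{\pi}\sin\theta=\tau(\xi-\eta)$, one has
\[
f(\xi)-f(\eta)\cos\theta-\frac{2}{\pi}f'(\eta)\sin\theta
=(1-\tau)f(\xi)+\tau\bigl(f(\xi)-f(\eta)-f'(\eta)(\xi-\eta)\bigr)+(\tau-\cos\theta)f(\eta).
\]
The three terms are nonnegative by, respectively, positivity of $f$, convexity, and $\cos\theta\leq\tau$. This works for either sign of $\theta$. The weight $\tau$ is precisely the multiplier your minimization would have discovered.
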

\begin{proof}
Since the assertion is trivial when $\xi=\eta$,
we assume that $\xi\neq\eta$.
Set
\begin{equation}
\theta=\frac{\pi(\xi-\eta)}{2}
\quad\text{and}\quad
\tau=\frac{\sin\theta}{\theta}.
\end{equation}
Because $\theta\in\intv{-\pi/2}{\pi/2}\smallsetminus\set{0}$,
we deduce that $\tau\in\zeroun$ and that
$\cos\theta\leq\tau$. Therefore, since the convexity of $f$ on
$\intv[r]{1}{\pinf}$ gives
$f(\xi)-f(\eta)\geq f'(\eta)(\xi-\eta)$,
we get
\begin{align}
f(\xi)-f(\eta)\cos\theta-\frac{2}{\pi}f'(\eta)\sin\theta
&=f(\xi)-f(\eta)\cos\theta-\tau f'(\eta)(\xi-\eta)
\nonumber\\
&\geq f(\xi)-f(\eta)\cos\theta-\tau\brk!{f(\xi)-f(\eta)}
\nonumber\\
&=(1-\tau)f(\xi)+(\tau-\cos\theta)f(\eta)
\nonumber\\
&\geq 0,
\end{align}
as claimed.
\end{proof}

\begin{lemma}
\label{l:1}
The following hold:
\begin{enumerate}
\item
\label{l:1i}
$(\forall\xi\in\RP)$
$\phi(\xi)\in\intv[r]{0}{\pi/2}$.
\item
\label{l:1ii}
$h$ is decreasing and $\ran h=\rzeroun$.
\item
\label{l:1iii}
$(\forall\xi\in\RPP)$ $h'(\xi)<0$.
\item
\label{l:1iv}
$(\forall\xi\in\intv[r]{1}{\pinf})$
$h''(\xi)>0$.
\item
\label{l:1v}
$(\forall\xi\in\intv[r]{1}{\pinf})$
$\xi^4\exp(-100\xi^3)\leq\exp(-100)$.
\end{enumerate}
\end{lemma}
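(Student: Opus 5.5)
Items \cref{l:1i,l:1ii,l:1iii,l:1iv} follow directly from the definitions. Item \cref{l:1v} is a one-variable calculus estimate.

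For \cref{l:1i}, we use that $0\leq\xi-\floor{\xi}<1$ for every $\xi\in\RP$. Multiplying by $\pi/2$ gives $\phi(\xi)\in\intv[r]{0}{\pi/2}$.

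For \cref{l:1ii} and \cref{l:1iii}, we differentiate to get $h'(\xi)=-300\xi^2\exp(-100\xi^3)$. This is strictly negative on $\RPP$, which is \cref{l:1iii}. Hence $h$ is continuous and strictly decreasing on $\RP$, with $h(0)=1$ and $h(\xi)\to 0$ as $\xi\to\pinf$. Since $h$ is positive, it follows that $\ran h=\rzeroun$.

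For \cref{l:1iv}, we differentiate once more:
\begin{equation}
h''(\xi)=\brk!{90000\xi^4-600\xi}\exp(-100\xi^3)
=600\xi\brk!{150\xi^3-1}\exp(-100\xi^3).
\end{equation}
For $\xi\geq 1$ we have $150\xi^3-1>0$, so $h''(\xi)>0$.

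For \cref{l:1v}, set $g(\xi)=4\ln\xi-100\xi^3$ on $\intv[r]{1}{\pinf}$. Then $g'(\xi)=4/\xi-300\xi^2<0$ there, because $4/\xi\leq 4<300\leq 300\xi^2$. So $g$ is decreasing and $g(\xi)\leq g(1)=-100$. Exponentiating gives $\xi^4\exp(-100\xi^3)\leq\exp(-100)$.

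None of these steps is a real obstacle. The only point needing care is the sign computation in \cref{l:1iv}, which is why the convexity claim is restricted to $\intv[r]{1}{\pinf}$. Near $0$ the factor $150\xi^3-1$ is negative, so $h$ is concave there.
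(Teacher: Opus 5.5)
Your proposal is correct. The paper leaves this lemma to the reader, and your direct verification is exactly the routine check intended: the floor-function bound for $\phi$, the formulas $h'(\xi)=-300\xi^2\exp(-100\xi^3)$ and $h''(\xi)=600\xi(150\xi^3-1)\exp(-100\xi^3)$, continuity with $h(0)=1$ and $h\to 0$ for the range, and the monotonicity of $4\ln\xi-100\xi^3$ on $\intv[r]{1}{\pinf}$.
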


Let $\xi\in\intv[r]{1}{\pinf}$ and $\beta\in\intv{0}{1}$,
and let us sketch our approach to analyzing
\begin{equation}
\proj_K\brk{u(\xi)-\beta e_0}.
\end{equation}
A closer look at the approach of
\cite[Theorem~14]{Hundal04}, which treats the case
$\beta=0$, suggests that, under a suitable assumption on $\xi$,
this projection is of the form
$\overline{\alpha}p(\overline{\eta})$, where
$(\overline{\alpha},\overline{\eta})$ solves
\begin{equation}
\minimize{\substack{
\eta\in\intv[o]{\floor{\xi}}{\floor{\xi}+1}\\
\alpha\in\intv[o]{0}{1}
}}{\norm!{\alpha p(\eta)-(u(\xi)-\beta e_0)}^2}.
\end{equation}
The corresponding first-order optimality conditions lead to the
equations defining the candidate parameters $\eta$ and $\alpha$ in
the following result. We will also derive several
estimates on $\alpha$ and $\eta$ that will be useful later.

\begin{proposition}
\label{p:1}
Let $k\in\NN\smallsetminus\set{0}$, set $\delta=1/(300k^2)$,
let $\xi\in\intv{k}{k+1-2\delta}$, and let $\beta\in\intv{0}{1}$.
Then there exists a unique $\eta\in\intv[o]{\xi}{k+1-\delta}$
such that
\begin{equation}
\label{e:beeq}
\beta
={-}h(\eta)\cos\omega
-\frac{\pi}{2}\frac{1+h(\eta)^2}{h'(\eta)}\sin\omega,
\quad\text{where}\,\,\omega=\frac{\pi(\eta-\xi)}{2}.
\end{equation}
Now set
\begin{equation}
\label{e:zcfy}
\alpha=\cos\omega+
\frac{\pi}{2}\frac{h(\eta)}{h'(\eta)}\sin\omega.
\end{equation}
Then the following hold:
\begin{enumerate}
\item
\label{p:1i}
$\proj_K\brk{u(\xi)-\beta e_0}=\alpha p(\eta)$.
\item
\label{p:1ii}
$1-\alpha
<{-}5\pi h'(\eta)(\eta-\xi)/16
+\pi^2(\eta-\xi)/(1200\eta^2)$.
\item
\label{p:1iii}
$\alpha\in\intv[o]{1/2}{1}$.
\item
\label{p:1iv}
Set $\gamma=\beta/\alpha+h(\eta)$. Then
$\beta<\gamma$ and
$\gamma^2-\beta^2\leq\pi^2(\eta-\xi)/(50\eta^2)$.
\end{enumerate}
\end{proposition}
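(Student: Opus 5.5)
Define $g(\eta)={-}h(\eta)\cos\omega-\frac{\pi}{2}\frac{1+h(\eta)^2}{h'(\eta)}\sin\omega$ on $\intv{\xi}{k+1-\delta}$, with $\omega=\pi(\eta-\xi)/2$. The plan has three stages: get existence and uniqueness of $\eta$ from a monotonicity and intermediate-value argument for $g$; identify $\alpha p(\eta)$ as the projection via the characterization of projections onto a closed convex cone; and derive items \cref{p:1ii}--\cref{p:1iv} from Taylor-type estimates combined with the smallness of $h$ and $h'$ supplied by \cref{l:1}\cref{l:1v}.

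\emph{Existence and uniqueness of $\eta$.} At $\eta=\xi$ we have $g(\xi)=-h(\xi)\leq 0\leq\beta$. At $\eta=k+1-\delta$ we have $\omega\geq\pi\delta/2$. Since $-1/h'(\eta)=\exp(100\eta^3)/(300\eta^2)$ is astronomically large compared with $1/\delta=300k^2$, the term $-\frac{\pi}{2}\frac{1+h^2}{h'}\sin\omega$ exceeds $2$, so $g>1\geq\beta$ there. Continuity then gives a zero of $g-\beta$. For uniqueness I would show $g$ is strictly increasing. Its derivative involves $h'\cos\omega$, a positive term coming from $-\frac{\pi^2}{4}\frac{1+h^2}{h'}\cos\omega$, the term $\frac{\pi}{2}h\sin\omega$, and the derivative of $-(1+h^2)/h'$, which is positive because $h''>0$ by \cref{l:1}\cref{l:1iv} and $h'<0$. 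The dominant positive term is of order $1/|h'|$, which dwarfs $|h'|$, so $g'>0$.

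\emph{Item \cref{p:1i}.} Set $z=u(\xi)-\beta e_0$. By the Moreau characterization for closed convex cones, it suffices to show that $q=\alpha p(\eta)\in K$ (this needs $\alpha>0$, which item \cref{p:1iii} provides), that $\scal{z-q}{q}=0$, and that $\scal{z-q}{p(t)}\leq 0$ for every $t\in\RP$. The orthogonality is exactly the first-order conditions.
\begin{itemize}
\item[] Differentiating $\norm{\alpha p(\eta)-z}^2$ in $\alpha$ gives $\alpha\norm{p(\eta)}^2=\scal{z}{p(\eta)}=\cos\omega-\beta h(\eta)$.
\item[] Differentiating in $\eta$ gives $\scal{\alpha p(\eta)-z}{p'(\eta)}=0$, where $p'=h'e_0+\frac{\pi}{2}u^\perp$.
\end{itemize}
Solving these two equations simultaneously should reproduce \cref{e:beeq} and \cref{e:zcfy}; I would verify this by direct algebra.

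The main obstacle is the inequality $\scal{z-q}{p(t)}\leq0$ for all $t$, which I would treat in two cases.
\begin{itemize}
\item[] For $t$ with $\abs{t-\eta}\leq 1$, expanding $\scal{z-q}{p(t)}$ in cosines and sines should produce, after substituting \cref{e:beeq} and \cref{e:zcfy}, a positive multiple of the negative of the expression in \cref{l:2} with $f=h$ (or a variant with $f$ built from $h$), so \cref{l:2} yields the sign. Note that $u(t)$ and $u(\eta)$ share a coordinate only when $\floor{t}\in\{k-1,k,k+1\}$, and $\abs{t-\eta}\leq1$ keeps us in that range.
\item[] For $t$ far from $\eta$, we have $\scal{u(\xi)}{u(t)}\leq0$ or $=0$, so only the $e_0$ components contribute, namely $(-\beta-\alpha h(\eta))h(t)\leq0$.
\end{itemize}
I would organize this as in \cite[Theorem~14]{Hundal04}.

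\emph{Items \cref{p:1ii}--\cref{p:1iv}.} For \cref{p:1ii}, start from \cref{e:zcfy}: $1-\alpha=(1-\cos\omega)-\frac{\pi}{2}\frac{h}{h'}\sin\omega$. Use \cref{e:beeq} together with $\beta\leq1$ to bound $\sin\omega\lesssim -2h'/\pi$, so that $\omega$ is tiny. Then bound $1-\cos\omega\leq\omega^2/2$ via \cref{l:0}\cref{l:0iv}, and use $\sin\omega\leq\omega$ and $\omega\leq\pi(\eta-\xi)/2$; \cref{l:1}\cref{l:1v} absorbs the remaining factors into the constant $1/(1200\eta^2)$. Item \cref{p:1iii} follows from \cref{p:1ii}, since its right-hand side is below $1/2$, together with $\alpha<1$, which holds because $h/h'<0$ and $\cos\omega<1$. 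For \cref{p:1iv}, positivity gives $\gamma>\beta/\alpha\geq\beta$. Writing $\gamma^2-\beta^2=(\gamma-\beta)(\gamma+\beta)$ and using $\gamma-\beta=\beta(1-\alpha)/\alpha+h$, the bound follows from \cref{p:1ii}, $\alpha>1/2$, and the tiny size of $h$ and $h'$.
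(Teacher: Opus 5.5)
The existence and uniqueness argument is sound. Tracking monotonicity of $g$ instead of the paper's $\varphi=h'(\beta-g)$ is equivalent. The first-order derivation of $\alpha(1+h(\eta)^2)=\cos\omega-\beta h(\eta)$ and items \cref{p:1ii}--\cref{p:1iii} are also fine.

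\textbf{Main gap: the dual inequality in \cref{p:1i}.} The gap is $\scal{z-\alpha p(\eta)}{p(t)}\leq 0$, and both halves of your case split fail.
\begin{itemize}
\item The expression of \cref{l:2} arises only when $\floor{t}=k$, since only then $\scal{u(t)}{u(\eta)}=\cos(\pi(t-\eta)/2)$. When $\floor{t}=k\pm1$, $u(t)$ lies in $\mathrm{span}\{e_k,e_{k+1}\}$ or $\mathrm{span}\{e_{k+2},e_{k+3}\}$, so the inner product has a different form, whether or not $\abs{t-\eta}\leq 1$.
\item Conversely, for $\floor{t}=k\pm1$ with $\abs{t-\eta}>1$ the $u$-components do not drop out. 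For $\floor{t}=k-1$ they contribute $\sin\phi(t)(\cos\phi(\xi)-\alpha\cos\phi(\eta))$, which is positive for $t\neq k-1$.
\end{itemize}

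The telltale sign is that your argument for \cref{p:1i} never uses $\eta<k+1-\delta$, although the conclusion fails without it. Write $z-\alpha p(\eta)=(\sin\omega/h'(\eta))\,w$ with $w=\frac{\pi}{2}e_0-\frac{\pi}{2}h(\eta)u(\eta)-h'(\eta)v(\eta)$ and $v(\eta)=-\sin\phi(\eta)e_{k+1}+\cos\phi(\eta)e_{k+2}$. One then needs $\scal{p(t)}{w}\geq 0$, and for $\floor{t}=k+1$
\[
\scal{p(t)}{w}=\frac{\pi}{2}h(t)+\cos\phi(t)\,h(\eta)\Bigl(300\eta^2\cos\phi(\eta)-\frac{\pi}{2}\sin\phi(\eta)\Bigr).
\]
As $\eta\to k+1$, the second term tends to $-\frac{\pi}{2}h(k+1)\cos\phi(t)$. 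So the expression turns negative for $t$ slightly above $k+1$, because $h$ has negative slope there while $\cos\phi$ has zero right-derivative. What saves it is $\frac{\pi}{2}\tan\phi(\eta)<300\eta^2$, which is exactly what $\eta<k+1-\delta$ yields via \cref{l:0}\cref{l:0v}. This case is the reason for the margin $\delta$.

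The block $\floor{t}=k-1$ also needs its own argument. Apply \cref{l:2} to the pair $(k,\eta)$ to bound the coefficient of $\sin\phi(t)$ below by $-\frac{\pi}{2}h(k)$, then use $h(t)\geq h(k)$ because $t<k$. Only the case $\abs{\floor{t}-k}\geq 2$ reduces to the $e_0$ term.

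\textbf{Smaller gap: item \cref{p:1iv}.} Expanding,
\[
\gamma^2-\beta^2=\frac{\beta^2(1-\alpha^2)}{\alpha^2}+\frac{2\beta h(\eta)}{\alpha}+h(\eta)^2.
\]
The first term is controlled by \cref{p:1ii} together with \cref{l:1}\cref{l:1v}. The last two are not $O((\eta-\xi)/\eta^2)$ just because $h$ is tiny: $\eta-\xi$ is itself at most of order $\abs{h'(\eta)}$, and $h(\eta)$ alone can be much larger than $(\eta-\xi)/\eta^2$. You need a \emph{lower} bound on $\eta-\xi$. This comes from \cref{e:beeq} once you use $h'(\eta)=-300\eta^2h(\eta)$:
\[
h(\eta)\bigl(\beta+h(\eta)\cos\omega\bigr)=\frac{\pi\bigl(1+h(\eta)^2\bigr)}{600\eta^2}\sin\omega\leq\frac{\pi^2\bigl(1+h(\eta)^2\bigr)(\eta-\xi)}{1200\eta^2}.
\]
Together with $\cos\omega>1/2$, this bounds both $\beta h(\eta)$ and $h(\eta)^2$ by the required multiple of $(\eta-\xi)/\eta^2$.
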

\begin{proof}
Set $\mu=k+1-\delta$, and define
$\theta\colon\RPP\to\RR\colon t\mapsto\pi(t-\xi)/2$
and
\begin{equation}
\varphi\colon\RPP\to\RR\colon t\mapsto
\frac{\pi}{2}\brk{1+h(t)^2}\sin\theta(t)
+\brk!{\beta+h(t)\cos\theta(t)}h'(t).
\end{equation}
Then
\begin{equation}
\label{e:2ds9}
\mu-\xi
\in\intv{\delta}{1-\delta},
\end{equation}
from which and \cref{l:0}\cref{l:0iii} we obtain
$\sin\theta(\mu)
\geq\sin(\pi\delta/2)
\geq\delta$.
In turn, since $\set{\beta,\cos\theta(\mu)}\subset\intv{0}{1}$,
it follows from \cref{l:1}\cref{l:1ii}\&\cref{l:1iii} that
\begin{align}
\varphi(\mu)
&=\frac{\pi}{2}\brk{1+h(\mu)^2}\sin\theta(\mu)
+\brk!{\beta+h(\mu)\cos\theta(\mu)}h'(\mu)
\nonumber\\
&\geq\frac{\pi\delta}{2}-
\brk!{\beta+h(\mu)\cos\theta(\mu)}\abs{h'(\mu)}
\nonumber\\
&\geq\frac{\pi\delta}{2}-2\abs{h'(\mu)},
\end{align}
while \cref{l:1}\cref{l:1v} entails that
\begin{align}
\label{e:2210}
\brk{\forall t\in\intv{\xi}{\mu}}\quad
2\abs{h'(t)}
&=600t^2\exp(-100t^3)
\nonumber\\
&\leq\frac{600\exp(-100)}{t^2}
\nonumber\\
&\leq\frac{600\exp(-100)}{k^2}
\nonumber\\
&<\frac{\pi}{600k^2}
\nonumber\\
&=\frac{\pi\delta}{2}.
\end{align}
Therefore $\varphi(\mu)>0$.
On the other hand, $\varphi(\xi)=\brk{\beta+h(\xi)}h'(\xi)<0$.
Therefore, since $\varphi$ is continuous on
$\intv{\xi}{\mu}$, the intermediate value
theorem guarantees that there exists
$\eta\in\intv[o]{\xi}{\mu}$ such that
$\varphi(\eta)=0$ or, equivalently, that \cref{e:beeq} holds.
To establish the uniqueness of $\eta$,
let us show that $\varphi$ is strictly increasing on
$\intv{\xi}{\mu}$.
For every $t\in\intv{\xi}{\mu}$, since
$0\leq\theta(t)\leq\theta(\mu)\leq\pi(1-\delta)/2$,
we deduce from \cref{l:0}\cref{l:0iii} that
$\cos\theta(t)
\geq\cos(\pi(1-\delta)/2)
=\sin(\pi\delta/2)
\geq\delta
>0$.
In turn, it results from \cref{l:1}\cref{l:1ii}--\cref{l:1iv} and
\cref{e:2210} that
\begin{align}
\brk{\forall t\in\intv{\xi}{\mu}}\quad
\varphi'(t)
&=\frac{\pi^2}{4}\brk{1+h(t)^2}\cos\theta(t)
+\frac{\pi}{2}\underbrace{h'(t)}_{<0}\underbrace{h(t)\sin\theta(t)}_{\in\intv[r]{0}{1}}
\nonumber\\
&\phantom{=}
+h''(t)\brk!{\beta+h(t)\cos\theta(t)}
+\brk!{h'(t)}^2\cos\theta(t)
\nonumber\\
&\geq\frac{\pi^2}{4}\cos\theta(t)-\frac{\pi}{2}\abs{h'(t)}
\nonumber\\
&\geq\frac{\pi}{2}\brk2{\frac{\pi\delta}{2}-\abs{h'(t)}}
\nonumber\\
&>0.
\end{align}
Therefore, $\varphi$ is strictly increasing on $\intv{\xi}{\mu}$.
To proceed with the remaining part of the proof, we observe that,
thanks to \cref{e:2ds9} and the fact that
$\eta\in\intv[o]{\xi}{\mu}$,
\begin{equation}
\label{e:hlxu}
\eta-\xi\in\intv[o]{0}{1-\delta}
\quad\text{and}\quad
\omega=\frac{\pi(\eta-\xi)}{2}
\in\intv[o]2{0}{\frac{\pi(1-\delta)}{2}}.
\end{equation}
Moreover, by \cref{e:beeq} and \cref{e:zcfy},
\begin{equation}
\label{e:vw30}
\frac{\cos\omega-\beta h(\eta)}{1+h(\eta)^2}
=\frac{\cos\omega}{1+h(\eta)^2}+
\frac{h(\eta)^2\cos\omega}{1+h(\eta)^2}
+\frac{\pi}{2}\frac{h(\eta)}{h'(\eta)}\sin\omega
=\cos\omega+\frac{\pi}{2}\frac{h(\eta)}{h'(\eta)}\sin\omega
=\alpha.
\end{equation}

\cref{p:1i}:
Since \cref{l:0}\cref{l:0v} yields
\begin{equation}
\tan\omega
=\cot\brk2{\frac{\pi}{2}-\omega}
<\frac{1}{\frac{\pi}{2}-\omega}
=\frac{2}{\pi\brk!{1-(\eta-\xi)}}
<\frac{2}{\pi\delta}
=\frac{600k^2}{\pi}
<\frac{600\eta^2}{\pi},
\end{equation}
it follows from \cref{e:zcfy} that
\begin{equation}
\alpha
=\cos\omega+\frac{\pi}{2}\frac{h(\eta)}{h'(\eta)}\sin\omega
=\brk2{1-\frac{\pi}{600\eta^2}\tan\omega}\cos\omega
>0.
\end{equation}
Hence, by construction of $K$, $\alpha p(\eta)\in K$.
Therefore, according to \cite[Proposition~6.28]{Livre1},
we must verify
\begin{equation}
\scal{p(\eta)}{u(\xi)-\beta e_0-\alpha p(\eta)}=0
\end{equation}
and that
\begin{equation}
\label{e:hekv}
(\forall x\in K)\quad
\scal{x}{u(\xi)-\beta e_0-\alpha p(\eta)}\leq 0.
\end{equation}
Since $\set{\xi,\eta}\subset\intv[r]{k}{k+1}$,
\begin{equation}
\label{e:hwh3}
\floor{\xi}=\floor{\eta}=k,
\end{equation}
which yields
\begin{equation}
\label{e:9ywu}
u(\xi)
=\cos\brk!{\phi(\xi)}e_{k+1}
+\sin\brk!{\phi(\xi)}e_{k+2}
\end{equation}
and
\begin{equation}
\label{e:9ywp}
p(\eta)
=h(\eta)e_0
+\cos\brk!{\phi(\eta)}e_{k+1}
+\sin\brk!{\phi(\eta)}e_{k+2}
=h(\eta)e_0+u(\eta).
\end{equation}
In turn, because $\set{e_0,e_{k+1},e_{k+2}}$ is an orthonormal set
in $\HH$, we derive from \cref{e:vw30} that
\begin{align}
\scal{p(\eta)}{u(\xi)-\beta e_0-\alpha p(\eta)}
&={-}\brk!{\beta+\alpha h(\eta)}h(\eta)
+\brk!{\cos\phi(\xi)-\alpha\cos\phi(\eta)}\cos\phi(\eta)
\nonumber\\
&\phantom{=}
+\brk!{\sin\phi(\xi)-\alpha\sin\phi(\eta)}\sin\phi(\eta)
\nonumber\\
&={-}\brk!{\beta+\alpha h(\eta)}h(\eta)
+\cos\brk!{\phi(\eta)-\phi(\xi)}
-\alpha
\nonumber\\
&=\cos\omega
-\brk!{\beta+\alpha h(\eta)}h(\eta)
-\alpha
\nonumber\\
&=\brk!{\cos\omega-\beta h(\eta)}-\alpha\brk{1+h(\eta)^2}
\nonumber\\
&=0.
\end{align}
Next, because $K=\ccone\menge{p(\tau)}{\tau\in\RP}$,
verifying \cref{e:hekv} is equivalent to establishing
\begin{equation}
\label{e:2xkt}
(\forall\tau\in\RP)\quad
\scal{p(\tau)}{u(\xi)-\beta e_0-\alpha p(\eta)}\leq 0.
\end{equation}
Toward this goal, define
\begin{equation}
v(\eta)
={-}\sin\brk!{\phi(\eta)}e_{k+1}+\cos\brk!{\phi(\eta)}e_{k+2}
\end{equation}
and
\begin{equation}
q(\eta)
=\frac{\pi}{2}e_0
-\frac{\pi}{2}h(\eta)u(\eta)
-h'(\eta)v(\eta).
\end{equation}
Since it results from \cref{e:hwh3} that
$\phi(\eta)-\omega=\phi(\xi)$,
we get from \cref{e:9ywu} that
\begin{align}
\cos(\omega)u(\eta)-\sin(\omega)v(\eta)
&=\brk!{\cos(\omega)\cos\brk{\phi(\eta)}
+\sin(\omega)\sin\brk{\phi(\eta)}}e_{k+1}
\nonumber\\
&\phantom{=}
+\brk!{\cos(\omega)\sin\brk!{\phi(\eta)}
-\sin(\omega)\cos\brk{\phi(\eta)}}e_{k+2}
\nonumber\\
&=\cos\brk!{\phi(\xi)}e_{k+1}+\sin\brk!{\phi(\xi)}e_{k+2}
\nonumber\\
&=u(\xi).
\end{align}
At the same time, we derive from
\cref{e:beeq,e:zcfy}
that
\begin{equation}
\beta+\alpha h(\eta)
={-}\frac{\pi}{2}\frac{\sin\omega}{h'(\eta)}
\quad\text{and}\quad
{-}\alpha+\cos\omega
={-}\frac{\pi}{2}\frac{h(\eta)}{h'(\eta)}\sin\omega.
\end{equation}
Hence, using \cref{e:9ywp}, we get
\begin{align}
u(\xi)-\beta e_0-\alpha p(\eta)
&={-}\brk!{\beta+\alpha h(\eta)}e_0
+(-\alpha+\cos\omega)u(\eta)
-\sin(\omega)v(\eta)
\nonumber\\
&=\frac{\pi}{2}\frac{\sin\omega}{h'(\eta)}e_0
-\frac{\pi}{2}\frac{h(\eta)}{h'(\eta)}\sin(\omega)u(\eta)
-\sin(\omega)v(\eta)
\nonumber\\
&=\frac{\sin\omega}{h'(\eta)}q(\eta).
\end{align}
Therefore, since $\sin\omega>0$ and $h'(\eta)<0$,
\cref{e:2xkt} is equivalent to
\begin{equation}
(\forall\tau\in\RP)\quad
\scal{p(\tau)}{q(\eta)}\geq 0.
\end{equation}
Take $\tau\in\RP$ and let us consider the following cases.

(a) $\floor{\tau}=k$:
A direct calculation yields
\begin{align}
&\scal{p(\tau)}{q(\eta)}
\nonumber\\
&\hspace{5mm}
=\scal3{h(\tau)e_0+\cos\brk!{\phi(\tau)}e_{k+1}
+\sin\brk!{\phi(\tau)}e_{k+2}}{
\frac{\pi}{2}e_0
-\frac{\pi}{2}h(\eta)u(\eta)
-h'(\eta)v(\eta)}
\nonumber\\
&\hspace{5mm}
=\frac{\pi}{2}h(\tau)
-\frac{\pi}{2}h(\eta)\brk!{
\cos\brk{\phi(\tau)}\cos\brk{\phi(\eta)}
+\sin\brk{\phi(\tau)}\sin\brk{\phi(\eta)}}
\nonumber\\
&\hspace{5mm}
\phantom{=}
+h'(\eta)\brk!{\cos\brk{\phi(\tau)}\sin\brk{\phi(\eta)}
-\sin\brk{\phi(\tau)}\cos\brk{\phi(\eta)}}
\nonumber\\
&\hspace{5mm}
=\frac{\pi}{2}h(\tau)
-\frac{\pi}{2}h(\eta)\cos\brk!{\phi(\tau)-\phi(\eta)}
-h'(\eta)\sin\brk!{\phi(\tau)-\phi(\eta)}
\nonumber\\
&\hspace{5mm}
=\frac{\pi}{2}h(\tau)
-\frac{\pi}{2}h(\eta)\cos\frac{\pi(\tau-\eta)}{2}
-h'(\eta)\sin\frac{\pi(\tau-\eta)}{2}.
\end{align}
Therefore, since $h>0$ is convex on
$\intv[r]{1}{\pinf}$ (\cref{l:1}\cref{l:1iv})
and since $\abs{\tau-\eta}\leq 1$,
\cref{l:2} ensures that $\scal{p(\tau)}{q(\eta)}\geq 0$.

(b) $\floor{\tau}=k-1$:
Since $h>0$ is convex on $\intv[r]{1}{\pinf}$,
we derive from \cref{l:2} and \cref{l:1}\cref{l:1i} that
\begin{align}
&\scal{p(\tau)}{q(\eta)}
\nonumber\\
&\hspace{5mm}
=\scal3{h(\tau)e_0+\cos\brk!{\phi(\tau)}e_k
+\sin\brk!{\phi(\tau)}e_{k+1}}{
\frac{\pi}{2}e_0
-\frac{\pi}{2}h(\eta)u(\eta)
-h'(\eta)v(\eta)}
\nonumber\\
&\hspace{5mm}
=\frac{\pi}{2}h(\tau)
+\sin\brk!{\phi(\tau)}\brk2{
-\frac{\pi}{2}h(\eta)\cos\brk!{\phi(\eta)}
+h'(\eta)\sin\brk!{\phi(\eta)}}
\nonumber\\
&\hspace{5mm}
=\frac{\pi}{2}h(\tau)
+\sin\brk!{\phi(\tau)}\brk2{
-\frac{\pi}{2}h(\eta)\cos\frac{\pi(k-\eta)}{2}
-h'(\eta)\sin\frac{\pi(k-\eta)}{2}
}
\nonumber\\
&\hspace{5mm}
\geq\frac{\pi}{2}h(\tau)
-\frac{\pi}{2}\sin\brk!{\phi(\tau)}h(k)
\nonumber\\
&\hspace{5mm}
\geq\frac{\pi}{2}h(\tau)-\frac{\pi}{2}h(k)
\nonumber\\
&\hspace{5mm}
\geq 0,
\end{align}
where the last inequality follows from the fact that
$h$ is decreasing on $\RP$ and that $\tau<k<\eta$.

(c) $\floor{\tau}=k+1$:
A direct calculation yields
\begin{align}
&\scal{p(\tau)}{q(\eta)}
\nonumber\\
&\hspace{5mm}
=\scal3{h(\tau)e_0+\cos\brk!{\phi(\tau)}e_{k+2}
+\sin\brk!{\phi(\tau)}e_{k+3}}{
\frac{\pi}{2}e_0
-\frac{\pi}{2}h(\eta)u(\eta)
-h'(\eta)v(\eta)}
\nonumber\\
&\hspace{5mm}
=\frac{\pi}{2}h(\tau)
+\cos\brk!{\phi(\tau)}\brk2{
-\frac{\pi}{2}h(\eta)\sin\brk!{\phi(\eta)}
-h'(\eta)\cos\brk!{\phi(\eta)}}
\nonumber\\
&\hspace{5mm}
=\frac{\pi}{2}h(\tau)
+\cos\brk!{\phi(\tau)}\cos\brk!{\phi(\eta)}h(\eta)\brk2{
-\frac{\pi}{2}\tan\brk!{\phi(\eta)}
+300\eta^2}.
\end{align}
On the other hand, since $\phi(\eta)\in\intv[o]{0}{\pi/2}$
and $k<\eta<k+1-\delta=k+1-1/(300k^2)$,
we derive from \cref{l:0}\cref{l:0v} that
\begin{equation}
\frac{\pi}{2}\tan\phi(\eta)
=\frac{\pi}{2}\cot\brk2{\frac{\pi}{2}-\phi(\eta)}
\leq\frac{\pi}{2}\frac{1}{\frac{\pi}{2}-\phi(\eta)}
=\frac{1}{k+1-\eta}
<300k^2
<300\eta^2.
\end{equation}
Hence $\scal{p(\tau)}{q(\eta)}\geq 0$.

(d) $\abs{\floor{\tau}-k}\geq 2$:
In this case,
\begin{equation}
\scal{e_{\floor{\tau}+1}}{u(\eta)}
=\scal{e_{\floor{\tau}+2}}{u(\eta)}
=\scal{e_{\floor{\tau}+1}}{v(\eta)}
=\scal{e_{\floor{\tau}+2}}{v(\eta)}
=0
\end{equation}
and therefore $\scal{p(\tau)}{q(\eta)}=(\pi/2)h(\tau)>0$.

\cref{p:1ii}:
Since $\eta\geq 1$, $h(\eta)\leq h(1)<1/4$.
Thus, because $\beta\in\intv{0}{1}$ and ${-}h'(\eta)>0$,
we derive from \cref{l:0}\cref{l:0iii} and \cref{e:beeq} that
\begin{equation}
\label{e:cbg4}
\eta-\xi
\leq\sin\frac{\pi(\eta-\xi)}{2}
=\sin\omega
=\frac{-2h'(\eta)(\beta+h(\eta)\cos\omega)}{\pi
\brk{1+h(\eta)^2}}
\leq\frac{-5h'(\eta)}{2\pi}.
\end{equation}
Therefore, using \cref{e:zcfy},
\cref{l:0}\cref{l:0iv}, and the inequality
$\sin\omega\leq\omega$,
we get
\begin{equation}
1-\alpha
=1-\cos\omega+\frac{\pi}{600\eta^2}\sin\omega
\leq\frac{\omega^2}{2}
+\frac{\pi^2(\eta-\xi)}{1200\eta^2}
<{-}\frac{5\pi h'(\eta)(\eta-\xi)}{16}
+\frac{\pi^2(\eta-\xi)}{1200\eta^2}.
\end{equation}

\cref{p:1iii}:
Since $\sin\omega>0$, \cref{e:zcfy} entails that
$\alpha<\cos\omega<1$.
Next, \cref{l:1}\cref{l:1iv} yields
$0>h'(\eta)\geq h'(1)={-}300\exp(-100)$.
Combining this with \cref{p:1ii} and \cref{e:hlxu}, we obtain
\begin{equation}
\alpha
>1+\frac{5\pi h'(\eta)(\eta-\xi)}{16}
-\frac{\pi^2(\eta-\xi)}{1200\eta^2}
\geq 1-\frac{375\pi\exp(-100)}{4}-\frac{\pi^2}{1200}
>\frac{1}{2}.
\end{equation}

\cref{p:1iv}:
First, because $\alpha<1$,
$\gamma-\beta=\beta(1-\alpha)/\alpha+h(\eta)>0$.
To establish the other assertion,
we will employ the inequalities ${-}h'>0$ and $0<h(\eta)<1$,
as well as the identity \cref{e:beeq} repeatedly. We have
\begin{equation}
\label{e:jwf1}
\beta h(\eta)
={-}\frac{\beta h'(\eta)}{300\eta^2}
\leq\frac{-2h'(\eta)(\beta+h(\eta)\cos\omega)}{
300\eta^2\brk{1+h(\eta)^2}}
=\frac{\pi}{300\eta^2}\sin\omega
\leq\frac{\pi}{300\eta^2}\omega
=\frac{\pi^2(\eta-\xi)}{600\eta^2}.
\end{equation}
Next, since $0<{-}h'(\eta)\leq 300\exp(-100)$,
it results from \cref{e:cbg4} that
$\cos\omega>1/2$, which leads to
\begin{align}
h(\eta)^2
&<\frac{4h(\eta)(\beta+h(\eta)\cos\omega)}{1+h(\eta)^2}
\nonumber\\
&=\frac{-4h'(\eta)(\beta+h(\eta)\cos\omega)}{300\eta^2
(1+h(\eta)^2)}
\nonumber\\
&=\frac{\pi}{150\eta^2}\sin\omega
\nonumber\\
&\leq\frac{\pi}{150\eta^2}\omega
\nonumber\\
&=\frac{\pi^2(\eta-\xi)}{300\eta^2}.
\end{align}
At the same time, \cref{l:1}\cref{l:1v} implies that
${-}h'(\eta)\leq 300\exp(-100)/\eta^2$, from which and
\cref{p:1ii} we obtain
\begin{equation}
1-\alpha
\leq\frac{1500\pi(\eta-\xi)\exp(-100)}{16\eta^2}
+\frac{\pi^2(\eta-\xi)}{1200\eta^2}
\leq\frac{\pi^2(\eta-\xi)}{800\eta^2}.
\end{equation}
Altogether, we derive from \cref{p:1iii} that
\begin{equation}
\gamma^2-\beta^2
=\frac{\beta^2(1+\alpha)(1-\alpha)}{\alpha^2}
+\frac{2\beta h(\eta)}{\alpha}
+h(\eta)^2
\leq 8(1-\alpha)
+4\beta h(\eta)
+h(\eta)^2
\leq\frac{\pi^2(\eta-\xi)}{50\eta^2},
\end{equation}
which completes the proof.
\end{proof}

We now return to the Douglas--Rachford iteration \cref{e:1}
and sketch our path to proving \cref{cex:1}, following the
strategy developed in \cite{Hundal04} for the
method of alternating projections.
Starting from $y_0=e_2$, we apply the Douglas--Rachford operator
iteratively, as long as the projection formula of \cref{p:1}
remains applicable for determining $\proj_K(2x_n-y_n)$.
It turns out that, after finitely many iterations, we have
\begin{equation}
y_m=T^my_0=\lambda_1\brk!{\rho_1e_0+u(\xi)}
\end{equation}
for some $m\in\NN\smallsetminus\set{0}$,
$\brk{\rho_1,\lambda_1}\in\intv{0}{1}^2$, and $2-1/150<\xi<2$.
Additionally, $y_m$ is close to $\lambda_1(\rho_1e_0+e_3)$.
Thus, at iteration $m$, \cref{p:1} is no longer sufficient for
determining
\begin{equation}
\proj_K(2x_m-y_m)
=\lambda_1\proj_K(u(\xi)-\rho_1e_0).
\end{equation}
To overcome this difficulty, let us employ the idea found in
\cite{Hundal04}. Set
\begin{equation}
z_0=y_0
\quad\text{and}\quad
z_1=\lambda_1(\rho_1e_0+e_3)
\end{equation}
and consider a ``local'' Douglas--Rachford iteration
$(T^jz_1)_{j\in\NN}$. In a similar fashion, at some iteration
$j\in\NN\smallsetminus\set{0}$, we have
\begin{equation}
T^jz_1=\lambda_2\brk!{\rho_2e_0+u(\eta)}
\end{equation}
for some $(\rho_2,\lambda_2)\in\intv{0}{1}^2$
and some $3-2/(300\times 2^2)<\eta<3$,
and $T^jz_1$ is close to $z_2=\lambda_2(\rho_2e_0+e_4)$.
Repeating this procedure, we construct a sequence
$(z_n)_{n\in\NN}$ of the form
\begin{equation}
(\forall n\in\NN)\quad
z_n=\lambda_n(\rho_ne_0+e_{n+2})
\end{equation}
and a strictly increasing sequence $(l_n)_{n\in\NN}$ in $\NN$
such that $(x_{l_n})_{n\in\NN}$ is sufficiently close in norm
to $(\lambda_ne_{n+2})_{n\in\NN}$ to ensure that
\begin{equation}
\inf_{n\in\NN}\norm{x_{l_n}}>0.
\end{equation}

Let us now make precise the construction of $z_{n+1}$ from
$z_n=\lambda_n(\rho_ne_0+e_{n+2})$.

\begin{proposition}
\label{p:5}
Let $k\in\NN\smallsetminus\set{0}$,
set $\delta=1/(300k^2)$,
let $\rho\in\intv[r]{0}{1}$ be such that
\begin{equation}
\label{e:huyd} 
\rho^2\leq
\frac{\pi^2}{50}\brk2{1-\frac{1}{k}},
\end{equation}
and let $\lambda\in\rzeroun$.
Set $\xi_0=k$, $\beta_0=\rho$, and $\chi_0=\lambda$.
Iterate
\begin{equation}
\label{e:1000}
\begin{array}{l}
\text{for $j=0,1,\ldots$}\\
\left\lfloor
\begin{array}{l}
\text{if $\xi_j\in\intv{k}{k+1-2\delta}$}\\
\left\lfloor
\begin{array}{l}
\text{$\xi_{j+1}$ is the unique point in
$\intv[o]{\xi_j}{k+1-\delta}$ such that}\\
\quad\;\;\displaystyle
\beta_j={-}h(\xi_{j+1})\cos\omega_j
-\frac{\pi}{2}\frac{1+h(\xi_{j+1})^2}{h'(\xi_{j+1})}\sin\omega_j,
\quad\text{where}\,\,\omega_j=\frac{\pi(\xi_{j+1}-\xi_j)}{2}
\smallskip\\
\displaystyle
\alpha_j=\cos\omega_j
+\frac{\pi}{2}\frac{h(\xi_{j+1})}{h'(\xi_{j+1})}
\sin\omega_j\smallskip\\
\beta_{j+1}=\beta_j/\alpha_j+h(\xi_{j+1})\smallskip\\
\chi_{j+1}=\alpha_j\chi_j
\end{array}
\right.\\
\text{else}\\
\left\lfloor
\text{terminate}.
\right.\\[2mm]
\end{array}
\right.
\end{array}
\end{equation}
Set
\begin{equation}
m=1+\sup\menge{j\in\NN}{\text{\cref{e:1000} does not terminate
at iteration $j$}}.
\end{equation}
Then $m\geq 1$ and the following hold:
\begin{enumerate}
\item
\label{p:5i}
Let $j\in\NN$ be such that $j+1\leq m$.
Then the following hold:
\begin{enumerate}
\item
$\alpha_j\in\intv[o]{1/2}{1}$.
\item
$\beta_j<\beta_{j+1}$.
\item
\label{p:5ic}
$\beta_{j+1}^2-\beta_0^2\leq(\pi^2/50)(1/k-1/(k+1))$.
\item
\label{p:5id}
$\beta_{j+1}^2\leq(\pi^2/50)(1-1/(k+1))$.
\end{enumerate}
\item
\label{p:5ii}
$m<\pinf$.
\item
\label{p:5iii}
Set
\begin{equation}
z=\lambda(\rho e_0+e_{k+1}),
\quad
\rho_+=\beta_m,
\quad
\lambda_+=\chi_m,
\quad\text{and}\quad
z_+=\lambda_+(\rho_+e_0+e_{k+2}).
\end{equation}
Then the following hold:
\begin{enumerate}
\item
\label{p:5iiia}
$\rho_+^2\leq(\pi^2/50)(1-1/(k+1))$.
\item
\label{p:5iiib}
$\rho_+^2-\rho^2\leq(\pi^2/50)(1/k-1/(k+1))$.
\item
\label{p:5iiic}
${-}\ln(\lambda_+/\lambda)\leq
(5\pi/8)(h(k)-h(k+1))+(\pi^2/600)(1/k-1/(k+1))$.
\item
\label{p:5iiid}
$T^mz=\lambda_+(\rho_+e_0+u(\xi_m))$.
\item
\label{p:5iiie}
$\norm{z_+-T^mz}\leq\pi/(300k^2)$.
\end{enumerate}
\end{enumerate}
\end{proposition}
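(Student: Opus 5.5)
Since $\xi_0=k\in\intv{k}{k+1-2\delta}$, iteration $0$ does not terminate, so $m\geq1$. The plan is an induction on $j$, with \cref{p:1} (applied with $\xi=\xi_j$, $\beta=\beta_j$) as the engine. Suppose $j+1\leq m$, so $\xi_j\in\intv{k}{k+1-2\delta}$, and suppose inductively that $\beta_j\in\intv{0}{1}$. Then \cref{p:1} yields $\xi_{j+1}$, gives $\alpha_j\in\intv[o]{1/2}{1}$ by \cref{p:1}\cref{p:1iii}, and gives $\beta_j<\beta_{j+1}$ together with
\[
\beta_{j+1}^2-\beta_j^2\leq\frac{\pi^2(\xi_{j+1}-\xi_j)}{50\,\xi_{j+1}^2}
\]
by \cref{p:1}\cref{p:1iv}. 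Summing over $i\leq j$ and invoking \cref{l:3} with $f(t)=1/t^2$ on $\intv{k}{k+1}$ gives
\[
\beta_{j+1}^2-\beta_0^2\leq\frac{\pi^2}{50}\int_k^{k+1}\frac{dt}{t^2}=\frac{\pi^2}{50}\Bigl(\frac1k-\frac1{k+1}\Bigr),
\]
which is \cref{p:5ic}. Adding \cref{e:huyd} gives \cref{p:5id}. Since $\pi^2/50<1$, this also shows $\beta_{j+1}\in\intv{0}{1}$, which closes the induction. Items \cref{p:5iiia} and \cref{p:5iiib} then follow by taking $j=m-1$, once $m<\pinf$ is known.

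For \cref{p:5ii}, suppose $m=\pinf$. Then $(\xi_j)$ is strictly increasing and bounded by $k+1-2\delta$, so it converges to some $\xi^*$. Hence $\omega_j\to0$. The defining equation gives
\[
\beta_j+h(\xi_{j+1})\cos\omega_j=-\frac{\pi}{2}\frac{1+h(\xi_{j+1})^2}{h'(\xi_{j+1})}\sin\omega_j\to0.
\]
But the left-hand side is at least $h(k+1)>0$, a contradiction. So $m<\pinf$.

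For \cref{p:5iiic}, write $-\ln(\lambda_+/\lambda)=\sum_{j<m}-\ln\alpha_j$. Since $\alpha_j>1/2$, we have $-\ln\alpha_j\leq(1-\alpha_j)/\alpha_j<2(1-\alpha_j)$. This crude bound gives a constant $5\pi/8$ only if we use \cref{p:1}\cref{p:1ii} carefully, namely
\[
1-\alpha_j<-\frac{5\pi}{16}h'(\xi_{j+1})(\xi_{j+1}-\xi_j)+\frac{\pi^2(\xi_{j+1}-\xi_j)}{1200\,\xi_{j+1}^2}.
\]
Multiplying by $2$ and applying \cref{l:3} to the decreasing nonnegative functions $-h'$ (decreasing because $h''>0$) and $1/t^2$ yields
\[
\frac{5\pi}{8}\int_k^{k+1}-h'=\frac{5\pi}{8}\bigl(h(k)-h(k+1)\bigr)
\]
plus $(\pi^2/600)(1/k-1/(k+1))$, exactly the stated bound. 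If the factor $2$ from $-\ln\alpha\leq2(1-\alpha)$ seemed too lossy one would use $-\ln\alpha\leq(1-\alpha)/\alpha$, but here it matches.

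For \cref{p:5iiid}, I prove by induction that $T^jz=\chi_j(\beta_je_0+u(\xi_j))$. For $j=0$ this holds since $u(k)=e_{k+1}$. For the step, use positive homogeneity of $\proj_V$ and $\proj_K$. Since $\proj_V$ kills $e_0$, we get
\[
x=\chi_ju(\xi_j),\qquad 2x-y=\chi_j\bigl(u(\xi_j)-\beta_je_0\bigr).
\]
By \cref{p:1}\cref{p:1i}, $\proj_K(2x-y)=\chi_j\alpha_jp(\xi_{j+1})$. Then
\[
Ty=\chi_j\beta_je_0+\chi_j\alpha_j\bigl(h(\xi_{j+1})e_0+u(\xi_{j+1})\bigr)=\chi_{j+1}\bigl(\beta_{j+1}e_0+u(\xi_{j+1})\bigr).
\]
For \cref{p:5iiie}, termination at step $m$ means $\xi_m>k+1-2\delta$, while $\xi_m<k+1-\delta$. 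Therefore
\[
\|z_+-T^mz\|=\lambda_+\|e_{k+2}-u(\xi_m)\|\leq\phi(k+1)-\phi(\xi_m)\leq\frac{\pi\cdot2\delta}{2}=\frac{\pi}{300k^2},
\]
using chord length at most arc length. The main obstacle is bookkeeping: keeping $\beta_j\in\intv{0}{1}$ throughout so that \cref{p:1} applies, and getting the constants in \cref{p:5iiic} right.
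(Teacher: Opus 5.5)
Your proposal is correct and follows essentially the paper's proof: an induction driven by \cref{p:1} that keeps $0\leq\beta_j<1$, summation via \cref{l:3} (for \cref{p:5ic} the paper instead telescopes $(\xi_{i+1}-\xi_i)/\xi_{i+1}^2\leq 1/\xi_i-1/\xi_{i+1}$), passage to the limit in the defining equation to get $m<\pinf$, and the same computations of $T^jz$ and of $\norm{e_{k+2}-u(\xi_m)}$. Two points need a small correction: since $\phi(k+1)=0$, the angle in \cref{p:5iiie} should be written $\pi/2-\phi(\xi_m)=\pi(k+1-\xi_m)/2$, and in \cref{p:5ii} your claim that the left-hand side is at least $h(k+1)$ needs a justification, e.g., $\beta_j\geq h(\xi_j)>h(k+1)$ for $j\geq1$, or simply take the limit inferior.
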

\begin{proof}
In the light of \cref{p:1},
\begin{equation}
\label{e:5jru}
\text{\cref{e:1000} does not terminate at iteration $j=0$}.
\end{equation}
Therefore $\xi_1$, $\alpha_0$, and $\beta_1$ are well defined, with
$\xi_1>\xi_0$. Furthermore, $m\geq 1$.

\cref{p:5i}:
We proceed by strong induction.
If $j=0$, \cref{p:1}\cref{p:1iii} (applied to $\xi_0=k$ and
$\beta_0\in\lzeroun$) states that
$\alpha_0\in\intv[o]{1/2}{1}$, while it results from
\cref{p:1}\cref{p:1iv} that $\beta_1>\beta_0$ and
\begin{equation}
\label{e:tkav}
\beta_1^2-\beta_0^2
\leq\frac{\pi^2(\xi_1-\xi_0)}{50\xi_1^2}
\leq\frac{\pi^2}{50}\brk2{\frac{1}{\xi_0}-\frac{1}{\xi_1}}
\leq\frac{\pi^2}{50}\brk2{\frac{1}{k}-\frac{1}{k+1}},
\end{equation}
from which and \cref{e:huyd} we obtain
\begin{equation}
\beta_1^2\leq\frac{\pi^2}{50}\brk2{1-\frac{1}{k+1}}.
\end{equation}
Now assume that $m\geq 2$ and that, for some
$j\in\NN$ such that $j+2\leq m$, the assertions hold for every
$i\in\set{0,\ldots,j}$.
Then
\begin{equation}
(\forall i\in\set{0,\ldots,j})\quad
\begin{cases}
0\leq\beta_i<\beta_{i+1}<1\\
\text{$\xi_{i+1}$ is well defined}\\
k\leq\xi_i<\xi_{i+1}\leq k+1-2\delta.
\end{cases}
\end{equation}
Therefore, by virtue of \cref{p:1}
(applied to $\xi_{j+1}$ and $\beta_{j+1}$),
we deduce that
$\xi_{j+2}$ is well defined and
$\xi_{j+1}<\xi_{j+2}<k+1-\delta$.
Furthermore, \cref{p:1}\cref{p:1iii} asserts that
$\alpha_{j+1}\in\intv[o]{1/2}{1}$, while \cref{p:1}\cref{p:1iv}
asserts that $\beta_{j+2}>\beta_{j+1}$.
In turn, for every $i\in\set{0,\ldots,j+1}$,
it results from \cref{p:1}\cref{p:1iv}
(applied to $\xi_i$ and $\beta_i$) that
\begin{equation}
\beta_{i+1}^2-\beta_i^2
\leq\frac{\pi^2(\xi_{i+1}-\xi_i)}{50\xi_{i+1}^2}
\leq\frac{\pi^2}{50}\brk2{\frac{1}{\xi_i}-\frac{1}{\xi_{i+1}}}.
\end{equation}
Thus
\begin{equation}
\label{e:tppv}
\beta_{j+2}^2-\beta_0^2
=\sum_{i=0}^{j+1}(\beta_{i+1}^2-\beta_i^2)
\leq\frac{\pi^2}{50}\sum_{i=0}^{j+1}
\brk2{\frac{1}{\xi_i}-\frac{1}{\xi_{i+1}}}
=\frac{\pi^2}{50}\brk2{\frac{1}{\xi_0}-\frac{1}{\xi_{j+2}}}
\leq\frac{\pi^2}{50}\brk2{\frac{1}{k}-\frac{1}{k+1}},
\end{equation}
from which and \cref{e:huyd} we obtain
$\beta_{j+2}^2\leq(\pi^2/50)(1-1/(k+1))$.

\cref{p:5ii}:
Assume to the contrary that $m=\pinf$.
Then \cref{e:1000} generates infinite sequences
$(\xi_j)_{j\in\NN}$ and $(\beta_j)_{j\in\NN}$, and the following
hold:
\begin{itemize}
\item
$(\forall j\in\NN)$ $k\leq\xi_j<\xi_{j+1}\leq k+1-2\delta$.
\item
$(\beta_j)_{j\in\NN}$ is strictly increasing and
$(\forall j\in\NN)$ $\beta_{j+1}^2\leq(\pi^2/50)(1-1/(k+1))$.
\end{itemize}
It follows that $(\xi_j)_{j\in\NN}$ converges to some
$\xi_{\infty}\in\intv[l]{k}{k+1-2\delta}$
and $(\beta_j)_{j\in\NN}$ converges to some
$\beta_{\infty}\in\zeroun$.
Therefore, since \cref{e:1000} entails that
\begin{equation}
(\forall j\in\NN)\quad
\beta_j={-}h(\xi_{j+1})\cos\frac{\pi(\xi_{j+1}-\xi_j)}{2}
-\frac{\pi}{2}\frac{1+h(\xi_{j+1})^2}{h'(\xi_{j+1})}
\sin\frac{\pi(\xi_{j+1}-\xi_j)}{2},
\end{equation}
letting $j\to\pinf$ yields $0<\beta_{\infty}={-}h(\xi_{\infty})<0$,
which is absurd.

\cref{p:5iiia}:
Apply \cref{p:5id} to $j=m-1$.

\cref{p:5iiib}:
Apply \cref{p:5ic} to $j=m-1$.

\cref{p:5iiic}:
Note that $(\forall\alpha\in\intv[o]{1/2}{1})$
${-}\ln\alpha\leq 2(1-\alpha)$.
Thus, we derive from \cref{e:1000},
\cref{p:5i}, and \cref{p:1}\cref{p:1ii} that
\begin{equation}
{-}\ln\frac{\lambda_+}{\lambda}
={-}\sum_{j=0}^{m-1}\ln\alpha_j
\leq 2\sum_{j=0}^{m-1}(1-\alpha_j)
\leq\frac{5\pi}{8}
\sum_{j=0}^{m-1}{-}h'(\xi_{j+1})(\xi_{j+1}-\xi_j)
+\frac{\pi^2}{600}\sum_{j=0}^{m-1}
\frac{\xi_{j+1}-\xi_j}{\xi_{j+1}^2}.
\end{equation}
On the other hand, applying
\cref{l:3} to the continuous decreasing functions
$\intv{k}{k+1}\ni t\mapsto{-}h'(t)$
and $\intv{k}{k+1}\ni t\mapsto 1/t^2$, we obtain
\begin{equation}
\sum_{j=0}^{m-1}{-}h'(\xi_{j+1})(\xi_{j+1}-\xi_j)
\leq\int_k^{k+1}{-}h'(t)dt
=h(k)-h(k+1)
\end{equation}
and
\begin{equation}
\sum_{j=0}^{m-1}\frac{\xi_{j+1}-\xi_j}{\xi_{j+1}^2}
\leq\int_k^{k+1}\frac{dt}{t^2}
=\frac{1}{k}-\frac{1}{k+1},
\end{equation}
respectively.

\cref{p:5iiid}:
We show that
\begin{equation}
\label{e:Tind}
\brk{\forall j\in\set{1,\ldots,m}}\quad
T^jz=\chi_j\brk!{\beta_je_0+u(\xi_j)}
\end{equation}
by induction on $j\in\set{1,\ldots,m}$.
In fact, since $V=\set{e_0}^{\perp}$,
$\proj_Vz=\lambda e_{k+1}=\chi_0e_{k+1}$.
Thus, in view of \cref{e:1000}, we derive from
\cite[Proposition~29.29]{Livre1} and \cref{p:1}\cref{p:1i} that
\begin{equation}
\proj_K(2\proj_Vz-z)
=\proj_K(\chi_0e_{k+1}-\chi_0\beta_0e_0)
=\chi_0\proj_K\brk!{u(\xi_0)-\beta_0e_0}
=\chi_1p(\xi_1)
\end{equation}
and, in turn, from \cref{e:drop} that
\begin{equation}
Tz
=\chi_0\beta_0e_0+\chi_1p(\xi_1)
=\chi_1\brk2{\frac{\beta_0}{\alpha_0}e_0+h(\xi_1)e_0+u(\xi_1)}
=\chi_1\brk!{\beta_1e_0+u(\xi_1)}.
\end{equation}
Now assume that $m\geq 2$ and that, for some
$j\in\set{1,\ldots,m-1}$, \cref{e:Tind} holds, and
set $y=T^jz=\chi_j\brk{\beta_je_0+u(\xi_j)}$.
Then, because $u(\xi_j)\in\set{e_0}^{\perp}=V$, we get
$\proj_Vy=\chi_ju(\xi_j)$.
Thus, invoking \cref{e:1000},
we deduce from
\cite[Proposition~29.29]{Livre1} and \cref{p:1}\cref{p:1i} that
\begin{equation}
\proj_K(2\proj_Vy-y)
=\chi_j\proj_K\brk!{u(\xi_j)-\beta_je_0}
=\chi_{j+1}p(\xi_{j+1}),
\end{equation}
from which and \cref{e:drop} it follows that
\begin{align}
T^{j+1}z
&=Ty
\nonumber\\
&=\chi_j\beta_je_0+\chi_{j+1}p(\xi_{j+1})
\nonumber\\
&=\chi_{j+1}\brk2{\frac{\beta_j}{\alpha_j}e_0
+h(\xi_{j+1})e_0+u(\xi_{j+1})}
\nonumber\\
&=\chi_{j+1}\brk!{\beta_{j+1}e_0+u(\xi_{j+1})}.
\end{align}
Hence \cref{e:Tind} holds.

\cref{p:5iiie}:
Because $\set{\alpha_j}_{0\leq j\leq m-1}\subset\zeroun$,
it results from \cref{e:1000} that
$\lambda_+
=\chi_m
=\lambda\prod_{j=0}^{m-1}\alpha_j
\in\zeroun$.
On the other hand, since \cref{e:1000} terminates at iteration
$m$, we infer that $\xi_m>k+1-2\delta$. Hence,
since $\floor{\xi_m}=k$ by construction,
it follows from \cref{p:5iiid} that
\begin{align}
\norm{z_+-T^mz}
&\leq\norm{e_{k+2}-u(\xi_m)}
\nonumber\\
&=\sqrt{\cos^2\phi(\xi_m)+\brk!{1-\sin\phi(\xi_m)}^2}
\nonumber\\
&=\sqrt{2-2\sin\phi(\xi_m)}
\nonumber\\
&=2\abs3{\sin\brk2{\frac{\pi}{4}-\frac{\phi(\xi_m)}{2}}}
\nonumber\\
&=2\sin\frac{\pi(k+1-\xi_m)}{4}
\nonumber\\
&\leq\frac{\pi(k+1-\xi_m)}{2}
\nonumber\\
&<\pi\delta
\nonumber\\
&=\frac{\pi}{300k^2},
\end{align}
which completes the proof.
\end{proof}

We are now in a position to prove \cref{cex:1}.

\begin{proof}[\textcolor{structure}{\textit{Proof
of \cref{cex:1}}}]
\cref{t:1} states that there exists $(x,y)\in\HH\times\HH$ such
that
\begin{equation}
\label{e:xyli}
x=\proj_Vy,
\quad
x\in V\cap K,
\quad
x=\proj_K(2x-y),
\quad
x_n\weakly x,
\quad\text{and}\quad
y_n\weakly y.
\end{equation}
However, as shown in \cite{Hundal04}, $V\cap K=\set{0}$. Therefore
\begin{equation}
x_n\weakly 0.
\end{equation}
Next, let us show that $y\in\RPP\cdot e_0$.
In fact, it results from \cref{e:xyli} that
$\proj_Vy=0$ and $\proj_K(-y)=0$.
Thus, by virtue of Moreau's conical decomposition
\cite[Theorem~6.30(i)]{Livre1},
$y\in V^{\perp}=\RR e_0$
and $(\forall w\in K)$
$\scal{w}{-y}\leq 0$. This implies that there exists
$\zeta\in\RR$ such that
\begin{equation}
y=\zeta e_0
\quad\text{and}\quad
(\forall w\in K)\,\,
\scal{w}{-\zeta e_0}\leq 0.
\end{equation}
Since $e_0+e_1=p(0)\in K$, it follows that $\zeta\geq 0$.
To show that $\zeta>0$, define
\begin{equation}
(\forall n\in\NN)\quad
\zeta_n=\scal{y_n}{e_0}
\end{equation}
and observe that, by construction of $K$,
\begin{equation}
(\forall w\in K)\quad
\scal{w}{e_0}\geq 0.
\end{equation}
Because $\set{x_n}_{n\in\NN}\subset\set{e_0}^{\perp}$,
we derive from \cref{e:1} that
\begin{equation}
(\forall n\in\NN)\quad
\zeta_{n+1}
=\zeta_n+\scal{\proj_K(2x_n-y_n)}{e_0}
\geq\zeta_n.
\end{equation}
At the same time, because $y_n\weakly y$,
$\zeta_n=\scal{y_n}{e_0}\to\scal{y}{e_0}=\zeta$.
Hence, it suffices to show that $\zeta_1>0$.
Indeed, since $\proj_K(2x_0-y_0)=\proj_Ke_2=\proj_Ku(1)$,
\cref{p:1} (applied to $k=1$ and $\beta=0$) entails that there
exist $\eta\in\intv[o]{1}{2}$ and $\alpha\in\zeroun$ such that
$\proj_K(2x_0-y_0)=\alpha p(\eta)$. Thus 
$y_1=\alpha p(\eta)$, which leads to
$\zeta_1=\alpha h(\eta)>0$, as desired.
Next, we show that $x_n\not\to 0$ and that $y_n\not\to y$.
To do so, set
\begin{equation}
\rho_0=0,
\quad
\lambda_0=1,
\quad\text{and}\quad
z_0=\lambda_0(\rho_0e_0+e_2).
\end{equation}
Using induction, we construct a sequence
$(z_n)_{n\in\NN}$ in $\HH$ of the form
\begin{equation}
\label{e:znfo}
(\forall n\in\NN)\quad
z_n=\lambda_n(\rho_ne_0+e_{n+2}),
\end{equation}
where $\set{\rho_n}_{n\in\NN}\subset\lzeroun$
and $\set{\lambda_n}_{n\in\NN}\subset\rzeroun$,
together with a sequence $(m_n)_{n\in\NN}$ in
$\NN\smallsetminus\set{0}$ such that
\begin{equation}
\label{e:znse}
(\forall n\in\NN)\quad
\begin{cases}
\displaystyle
\rho_n^2\leq\frac{\pi^2}{50}\brk2{1-\frac{1}{n+1}}
\medskip\\
\displaystyle
\rho_{n+1}^2-\rho_n^2\leq\frac{\pi^2}{50}
\brk2{\frac{1}{n+1}-\frac{1}{n+2}}
\medskip\\
\displaystyle
{-}\ln\frac{\lambda_{n+1}}{\lambda_n}
\leq\frac{5\pi}{8}\brk!{h(n+1)-h(n+2)}
+\frac{\pi^2}{600}\brk2{\frac{1}{n+1}-\frac{1}{n+2}}
\medskip\\
\displaystyle
\norm{z_{n+1}-T^{m_n}z_n}\leq\frac{\pi}{300(n+1)^2}.
\end{cases}
\end{equation}
Assume that, for some $n\in\NN$,
$z_n=\lambda_n(\rho_ne_0+e_{n+2})$ has already been constructed,
where $\lambda_n\in\rzeroun$ and $\rho_n\in\lzeroun$ satisfies
$\rho_n^2\leq(\pi^2/50)\brk{1-1/(n+1)}$.
(This is true when $n=0$.)
We then obtain $m_n\in\NN\smallsetminus\set{0}$,
$\rho_{n+1}\in\lzeroun$, and $\lambda_{n+1}\in\rzeroun$
by applying \cref{p:5} to
$(k,\rho,\lambda)=(n+1,\rho_n,\lambda_n)$.
We derive from \cref{e:znse} that
\begin{align}
(\forall n\in\NN)\quad
\ln\lambda_{n+1}
&=\sum_{i=0}^n\ln\frac{\lambda_{i+1}}{\lambda_i}
\nonumber\\
&\geq\frac{5\pi}{8}\brk!{h(n+2)-h(1)}
+\frac{\pi^2}{600}\brk2{\frac{1}{n+2}-1}
\nonumber\\
&>{-}\frac{5\pi\exp(-100)}{8}-\frac{\pi^2}{600}
\nonumber\\
&>3\ln\pi-\ln(1800),
\end{align}
which implies that
\begin{equation}
\label{e:infl}
\inf_{n\in\NN}\lambda_n
\geq\exp\brk2{{-}\frac{5\pi\exp(-100)}{8}-\frac{\pi^2}{600}}
>\frac{\pi^3}{1800}.
\end{equation}
Next, define a strictly increasing sequence
$(l_n)_{n\in\NN}$ inductively via
\begin{equation}
\label{e:lnde}
l_0=0
\quad\text{and}\quad
(\forall n\in\NN)\,\,
l_{n+1}=l_n+m_n.
\end{equation}
We claim that
\begin{equation}
\label{e:ylnz}
(\forall n\in\NN)\quad
\norm{z_n-y_{l_n}}\leq\frac{\pi}{300}\sum_{i=0}^{n-1}
\frac{1}{(i+1)^2},
\end{equation}
with the convention that $\sum_{i=0}^{-1}(i+1)^{-2}=0$.
When $n=0$, the claim is trivial since $z_0=e_2=y_0=y_{l_0}$.
Assume that, for some $n\in\NN$,
$\norm{z_n-y_{l_n}}\leq(\pi/300)\sum_{i=0}^{n-1}(i+1)^{-2}$.
Invoking \cref{e:lnde}, \cref{e:znse}, and the nonexpansiveness of
$T$ \cite[Theorem~26.11]{Livre1}, we obtain
\begin{align}
\norm{z_{n+1}-y_{l_{n+1}}}
&=\norm{z_{n+1}-T^{m_n}z_n+T^{m_n}z_n-T^{m_n}y_{l_n}}
\nonumber\\
&\leq\norm{z_{n+1}-T^{m_n}z_n}+\norm{T^{m_n}z_n-T^{m_n}y_{l_n}}
\nonumber\\
&\leq\norm{z_{n+1}-T^{m_n}z_n}+\norm{z_n-y_{l_n}}
\nonumber\\
&\leq\frac{\pi}{300(n+1)^2}
+\frac{\pi}{300}\sum_{i=0}^{n-1}\frac{1}{(i+1)^2}
\nonumber\\
&=\frac{\pi}{300}\sum_{i=0}^n\frac{1}{(i+1)^2}.
\end{align}
In turn, appealing to \cref{e:1}, \cref{e:znfo}, and the
nonexpansiveness of $\proj_V$, we obtain
\begin{align}
(\forall n\in\NN)\quad
\lambda_n-\norm{x_{l_n}}
&=\norm{\lambda_ne_{n+2}}-\norm{x_{l_n}}
\nonumber\\
&\leq\norm{\lambda_ne_{n+2}-x_{l_n}}
\nonumber\\
&=\norm{\proj_Vz_n-\proj_Vy_{l_n}}
\nonumber\\
&\leq\norm{z_n-y_{l_n}}
\nonumber\\
&\leq\frac{\pi}{300}\sum_{i=0}^{n-1}\frac{1}{(i+1)^2}
\nonumber\\
&\leq\frac{\pi^3}{1800},
\end{align}
where the last inequality follows from the fact that
$\sum_{i=1}^{\pinf}1/i^2=\pi^2/6$.
Hence, it results from \cref{e:infl}
and the nonexpansiveness of $\proj_V$ that
\begin{equation}
\inf_{n\in\NN}\norm{x_{l_n}}
\geq\inf_{n\in\NN}\lambda_n-\frac{\pi^3}{1800}
>0
\end{equation}
and, therefore, that $x_n\not\to 0$.
Finally, if $(y_n)_{n\in\NN}$ converged strongly to $y$,
the continuity of $\proj_V$ and \cref{e:xyli} would yield
$x_n=\proj_Vy_n\to\proj_Vy=x=0$, which leads to a contradiction.
\end{proof}

\paragraph{\textcolor{structure}{Acknowledgment}}{%
The work of M.~N.~B\`{u}i was partially supported by
the National Natural Science Foundation of China
under grant number 12671353.
The work of H.~Luo was partially supported by
the National Natural Science Foundation of China
under grant number 12671353
and by the Chongqing Natural Science Foundation under grant number
KJZD-K202500507.
The work of X.~Wang was partially supported by an
NSERC Discovery Grant.
}

\bibliographystyle{jnsao}
\bibliography{bibli}

\end{document}